\documentclass[12pt]{amsart}

\setlength{\textwidth}{15cm}
\setlength{\oddsidemargin}{1cm}
\setlength{\evensidemargin}{1cm}
\setlength{\textheight}{21cm}
\setlength{\parskip}{2mm}
\setlength{\parindent}{0em}
\setlength{\headsep}{1.5cm}

\usepackage{amsmath, amsthm, amssymb}
\usepackage{url} 
\usepackage{graphicx}
\usepackage{xcolor}
\usepackage{moreverb}
 \usepackage{fancyvrb}

\def\r{\mathbb R}

\usepackage{listings}
 \lstnewenvironment{code}[1][]
  {\lstset{#1}}
  {}

\lstset{
  frame = single,
  basicstyle = \ttfamily\small,
  language = Mathematica
}

\newtheorem{theorem}{Theorem}[section]
 \newtheorem{proposition}[theorem]{Proposition}
 
\theoremstyle{definition}
\newtheorem{definition}[theorem]{Definition}
\newtheorem{example}[theorem]{Example}
\newtheorem{remark}[theorem]{Remark}

\begin{document}

\title[Translation surfaces with constant sectional curvature]{Classification of translation surfaces in $\r^3$ with constant sectional curvature }

\author{Muhittin Evren Aydin}
\address{Department of Mathematics, Faculty of Science, Firat University, Elazig,  23200 Turkey}
\email{meaydin@firat.edu.tr}
\author{Rafael L\'opez}
\address{Departamento de Geometr\'{\i}a y Topolog\'{\i}a Universidad de Granada 18071 Granada, Spain}
\email{rcamino@ugr.es}
\author{ Adela Mihai}
 \address{Technical University of Civil Engineering Bucharest,
Department of Mathematics and Computer Science, 020396, Bucharest, Romania
and Transilvania University of Bra\c{s}ov, Interdisciplinary Doctoral
School, 500036, Bra\c{s}ov, Romania}
 \email{adela.mihai@utcb.ro, adela.mihai@unitbv.ro}

\keywords{Translation surface; sectional curvature; semi-symmetric connection; non-metric connection}
\subjclass{53B40, 53C42, 53B20}
\begin{abstract}
In this paper, we study translation surfaces in the Euclidean space   endowed with a canonical semi-symmetric non-metric connection. We completely classify the  translation surfaces of constant sectional curvature with respect to this connection, proving that they are   generalized cylinders. This consequence is the same as in the case of the Levi-Civita connection, but in this new setting,  there are also   generalized cylinders whose sectional curvature can be constant or non-constant, in contrast to the Levi-Civita connection,  where the Gaussian curvature is zero. 
\end{abstract}
\maketitle

\section{Introduction and statement of the results} \label{intro}

Let $(x,y,z)$ be the canonical coordinates in the $3$-dimensional Euclidean space $\r^3$. A surface $M$ in  $\r^3$ is called a {\it translation surface}  if $M$ is locally a graph of the form $z=f(x)+g(y)$, for some smooth functions $f$ and $g$. If $M$ is parametrized by  $\psi(x,y)=(x,y,f(x)+g(y))$, then $M$ is  the sum of two planar curves, namely, $\psi(x,y)=(x,0,f(x))+(0,y,g(y))$. Scherk proved that a  translation minimal surface is a plane or $z=\log\cos y-\log\cos x$ \cite{sc}. Liu  showed that   circular cylinders are the only translation surfaces with non-zero constant mean curvature \cite{li}. In \cite{li}, it was also proved that if the Gaussian curvature of a translation surface $M$ is constant then $M$ is a generalized cylinder and so the curvature is identically $0$. The notion of translation surfaces can be generalized assuming that the surface is the sum of two spatial curves of $\r^3$. It is known the classification of such surfaces if the Gaussian curvature is constant \cite{hr,lm} or if the mean curvature is zero \cite{ha,hr2,lp} or non-zero constant \cite{ha}.

In this paper, we study translation surfaces   when   $\r^3$ is endowed with a semi-symmetric non-metric connection. We introduce the definitions. Let $\langle,\rangle$ be the Euclidean metric of $\r^3$.   An affine connection $\widetilde{\nabla}$ in $\r^3$ is said to be {\it semi-symmetric} if there is a non-zero vector field $W\in\mathfrak{X}(\r^3)$ such that its torsion is
\begin{equation} \label{tors}
\widetilde{T}(X,Y)=\langle W,Y\rangle X-\langle W,X\rangle Y, \quad \forall X,Y \in \mathfrak{X}(\r^3).
\end{equation}
If in addition the metric $\langle,\rangle$ is not parallel, $\widetilde{\nabla}\langle,\rangle \neq0$, then  $\widetilde{\nabla}$ is said to be a {\it semi-symmetric non-metric connection}. In such a case,   there is a relation between $\widetilde{\nabla}$ and the Levi-Civita connection  $\widetilde{\nabla}^0$, namely,  
$$
\widetilde{\nabla}_XY=\widetilde{\nabla}^0_XY+\langle W,Y\rangle X, \quad \forall X,Y \in \mathfrak{X}(\r^3).
$$

The definition of a semi-symmetric connection on a Riemannian manifold   was introduced in \cite{fs} by   Friedmann and  Schouten. The study of submanifolds  endowed with semi-symmetric connections was initiated in \cite{hay} by   Hayden. Following this, the submanifolds in Riemannian manifolds endowed with a semi-symmetric metric connection were studied in \cite{im,na,ya}. The notion of semi-symmetric non-metric connections on Riemannian manifolds was introduced in \cite{ag,ac} by  Agashe and   Chafle, studying their properties and submanifolds.
  
Many semi-symmetric non-metric connections can be defined by depending on   the vector field $W$. The simplest choice for $W$ is to consider the canonical vector fields  $\{\partial_x,\partial_y,\partial_z\}$ of $\r^3$. Without loss of generality, in this paper we will assume that  $W=\partial_z$.

\begin{definition} A semi-symmetric non-metric connection $\widetilde{\nabla}$ on $\r^3$ is said to be {\it canonical} if it is defined by
\begin{equation} \label{nmc}
\widetilde{\nabla}_XY=\widetilde{\nabla}^0_XY+\langle \partial_z ,Y \rangle X , \quad X,Y \in \mathfrak{X}(\r^3).
\end{equation}
\end{definition}

To abbreviate, we will say simply {\it canonical snm-connection}. Once we have defined the connection $\widetilde{\nabla}$, we ask for the similar problems that were presented at the beginning of the paper. Let $M$ be a surface isometrically immersed in $\r^3$. Denote by $\nabla$ and $\nabla^0$ be the induced connections on $M$ from $\widetilde{\nabla}$ and $\widetilde{\nabla}^0$, respectively. For the problems involving the mean curvature, we point out that the second fundamental forms of $M$ with respect to $\widetilde{\nabla}$ and $\widetilde{\nabla}^0$ coincide (see Eq. \eqref{gauss-form} below). In particular, the mean curvature coincides for both connections and thus questions related with the mean curvature of surfaces are the same than with the Levi-Civita connection. 

We now deal with  problems about the sectional curvature. Let $K^0$ denote the sectional curvature of $M$ with respect to the induced Levi-Civita connection $\nabla^0$. Since the dimension of $M$ is $2$, the sectional curvature $K^0$ is simply the Gaussian curvature of $M$.  However, the notion of sectional curvature of $\widetilde{\nabla}$, and consequently of $\nabla$, cannot be defined by using the usual way as the Levi-Civita connection. For this, recently the third author of the present paper in collaboration with I. Mihai  have  defined the sectional curvature of a Riemannian manifold endowed with a snm-connection \cite{am0}. So, if $\widetilde{R}$ is the tensor curvature of $\widetilde{\nabla}$, then the sectional curvature $\widetilde{K}(\pi)$ of a plane $\pi$ of $\r^3$ is defined by 
$$
\widetilde{K}(\pi)=\frac{1}{2}\left( \tilde{g}(\widetilde{R}(e_1,e_2)e_2,e_1)+\tilde{g}(\widetilde{R}(e_2,e_1)e_1,e_2)\right),
$$
where $\{e_1,e_2\}$ is an orthonormal basis of $\pi$. This definition does not depend on the basis of $\pi$. Similarly, it is defined the sectional curvature $K$ of the tangent plane of a surface of $\r^3$ with respect to $\nabla$.

Motivated by the results of translation surfaces with constant Gauss curvature in the Euclidean space, we consider the problem of classifying the translation surfaces with   $K$ constant. We are able to provide a successful answer to this problem in the following result. 

\begin{theorem}\label{mr}
If $M$ is a  translation surface in $\r^3$ of constant sectional curvature $K$ with respect to the canonical snm-connection    then $M$ is a generalized cylinder.  
\end{theorem}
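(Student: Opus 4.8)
The plan is to convert the curvature condition into a single polynomial identity in the generating functions $f(x)$ and $g(y)$ and then resolve it by separation of variables. First I would compute the curvature tensor $\widetilde{R}$ of the canonical snm-connection. Since $\widetilde{\nabla}^0$ is the flat connection of $\r^3$ and $\partial_z$ is $\widetilde{\nabla}^0$-parallel, substituting \eqref{nmc} into the definition of $\widetilde R$ makes every first-order term cancel (using $X\langle\partial_z,Z\rangle=\langle\partial_z,\widetilde{\nabla}^0_XZ\rangle$) and leaves the closed expression
\[
\widetilde R(X,Y)Z=\langle\partial_z,Z\rangle\bigl(\langle\partial_z,Y\rangle X-\langle\partial_z,X\rangle Y\bigr).
\]

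\emph{Induced curvature on $M$.} The induced connection is $\nabla_XY=\nabla^0_XY+\langle\partial_z^{\top},Y\rangle X$, which is again semi-symmetric and non-metric, now with associated field $\zeta:=\partial_z^{\top}$. Repeating the computation above on $M$ -- where the only new feature is that $\zeta$ is no longer parallel -- I would differentiate the constant field $\partial_z=\zeta+\nu\, n$, with $\nu:=\langle\partial_z,n\rangle$ and $n$ the unit normal, to obtain $\nabla^0_X\zeta=\nu\,SX$ for the shape operator $S$. Substituting this and symmetrizing as in the definition of $K$ yields the compact relation
\[
K=K^0-\nu H+\tfrac{1-\nu^2}{2},
\]
where $K^0$ and $H$ are the Gauss and mean curvatures of $M$; note $|\zeta|^2=1-\nu^2$.

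\emph{The explicit identity.} For $\psi(x,y)=(x,y,f(x)+g(y))$ one has $W:=\sqrt{1+f'^2+g'^2}$, $\nu=1/W$, $K^0=f''g''/W^4$, and $2H=\bigl(f''(1+g'^2)+g''(1+f'^2)\bigr)/W^3$. Feeding these into the relation above gives
\[
K=\frac{2f''g''-(1+g'^2)f''-(1+f'^2)g''+(f'^2+g'^2)(1+f'^2+g'^2)}{2(1+f'^2+g'^2)^2}.
\]
Imposing $K\equiv c$ and clearing the denominator produces a polynomial identity in $f',f'',g',g''$ that I would organise into a pure-$x$ part, a pure-$y$ part, and a mixed part $2f''g''-f''g'^2-f'^2g''+(2-4c)f'^2g'^2$, the $c$-term being absorbed after expansion.

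\emph{Separation of variables.} Applying $\partial_x\partial_y$ annihilates the single-variable parts and leaves
\[
f'''g'''-f'''g'g''-f'f''g'''+2(2-4c)f'f''g'g''=0.
\]
Going back one derivative, the coefficients of the $y$-functions $g''$ and $g'^2$ in $\partial_x$ of the identity must be independent of $y$; when $\{1,g'^2,g''\}$ are linearly independent this forces $f'''=f'f''$ and $f'''=2(2-4c)f'f''$ simultaneously, hence $f'f''=0$ and $f''\equiv0$, a generalized cylinder (and symmetrically with the roles of $x,y$ reversed). \textbf{The main obstacle} is the complementary case, where $\{1,g'^2,g''\}$ (or $\{1,f'^2,f''\}$) are linearly dependent: this is a Riccati relation $g''=ag'^2+b$ whose solution families must be integrated and substituted back into the full identity to test consistency. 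I expect every such branch either to collapse to $g''\equiv0$ or to contradict $K\equiv c$, so that carrying out this finite but delicate case analysis completes the proof that $M$ is a generalized cylinder.
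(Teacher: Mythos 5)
Your geometric preliminaries are correct and in fact cleaner than the paper's coordinate computations: the closed formula $\widetilde R(X,Y)Z=\langle\partial_z,Z\rangle\bigl(\langle\partial_z,Y\rangle X-\langle\partial_z,X\rangle Y\bigr)$, the relation $K=K^0-\nu H+\tfrac12(1-\nu^2)$, and the identity you derive for $z=f(x)+g(y)$ all check out (the latter is exactly \eqref{sec-curv-type1}), and the double-differentiation step is also correct. Nevertheless there are genuine gaps. The first is that you have addressed at most half of the theorem: in this paper a translation surface may equally be presented as $x=f(y)+g(z)$, and because the connection singles out $\partial_z$, only the coordinates $x$ and $y$ can be relabelled --- the type $x=f(y)+g(z)$ is \emph{not} reducible to the type $z=f(x)+g(y)$ and occupies the entire second half of the proof in Sect.~\ref{main-proof}, with its own curvature equation \eqref{sec-curv-type2}, in which $f$ and $g$ enter non-symmetrically and the case division ($K_0=0$, $K_0=1$, $K_0\notin\{0,1\}$) is different. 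Your proposal never mentions this type; note that there the normal component of $\partial_z$ is $\nu=-g'/\sqrt{w}$ rather than $1/\sqrt{w}$, so the resulting identity genuinely differs from the one you analyzed and none of your separation-of-variables computations can simply be quoted.

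The second gap is that the case you yourself flag as ``the main obstacle'' --- the Riccati branch $g''=ag'^2+b$ --- is not a peripheral consistency check; it is where the substance of the proof lies, and you only state the expectation that it collapses. Substituting $g''=ag'^2+b$ into the full identity leaves, for each fixed $x$, a polynomial relation in $g'$ whose $g'^{4}$-coefficient is $1-2c$: for $c\neq\tfrac12$ this does force $g'$ constant, but for $c=\tfrac12$ (the paper's subcase $K_0=1$) nothing collapses and a separate, delicate argument is unavoidable --- in the paper this is precisely the derivation of the relations $f''=c_5(c_3+f'^2)$, $g''=c_6(c_4+g'^2)$, their substitution into \eqref{Kconst-02}, and the final contradiction $A_3=-3$. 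Until that branch is carried out, the proposal is a plan rather than a proof. There is also a smaller hole in your ``independent'' case: from $f'''=f'f''$ and $f'''=2(2-4c)f'f''$ one may conclude $f'f''=0$ only when $2(2-4c)\neq1$, i.e.\ $c\neq\tfrac38$; at $c=\tfrac38$ the two relations coincide, and you must additionally invoke the vanishing of the pure-$x$ coefficient (which gives $f'f''(f'^2-2)=0$ and hence $f''\equiv0$) to close the argument.
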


Since the canonical snm-connection $\widetilde{\nabla}$ is constructed with the vector field $\partial_z$, the coordinates $x$ and $y$ can be interchanged. Thus we can assume that a translation surface can be given in the forms $z=f(x)+g(y)$ or $x=f(y)+g(z)$. From Thm. \ref{mr}, if the surface is a generalized cylinder, then  one of the functions $f$ or $g$ must be linear. It is important to point out a difference with respect to the Levi-Civita connection. In $\r^3$, the Gaussian curvature of a generalized cylinder is constantly $0$. In contrast, with the canonical snm-connection $\nabla$, the sectional curvature $K$ may or may not be a constant  but if it is a constant, it is not necessarily $0$.

The organization of the paper is as follows. We first recall some fundamental formulas and equations in Sect. \ref{prel} for a Riemannian manifold and its submanifolds endowed with a snm-connection. In Sect. \ref{compt} we calculate the sectional curvature $K$ of the translation surfaces. Section \ref{main-proof} is devoted to prove  Thm. \ref{mr}. Once we know that the translation surfaces with $K$ constant are generalized cylinders, in Sect. \ref{cyln}, we obtain a complete description of these cylinders in Thms. \ref{t51}, \ref{t52} and \ref{t53}.
 
\section{Riemannian manifolds endowed with snm connections} \label{prel}

Let $(\widetilde{M},\tilde{g})$ be a Riemannian manifold, $\mbox{dim}\widetilde{M}\geq 2$ and let $\widetilde{\nabla}$ be an affine connection on $\widetilde{M}$. The torsion of $\widetilde{\nabla}$ is a $(1,2)$-tensor field $\widetilde{T}$ and the curvature of $\widetilde{\nabla}$ is a $(1,3)$-tensor field $\widetilde{R}$ defined, respectively, by
\begin{equation*}
\begin{split}
\widetilde{T}(X,Y)&=\widetilde{\nabla}_XY-\widetilde{\nabla}_YX-[X,Y], \quad \forall X,Y \in \mathfrak{X}(\widetilde{M}),\\
\widetilde{R}(X,Y)Z&=\widetilde{\nabla}_X\widetilde{\nabla}_YZ-\widetilde{\nabla}_Y\widetilde{\nabla}_XZ-\widetilde{\nabla}_{[X,Y]}Z, \quad \forall X,Y,Z \in \mathfrak{X}(\widetilde{M}).
\end{split}
\end{equation*}

The affine connection $\widetilde{\nabla}$ on $\widetilde{M}$ is said to be {\it semi-symmetric} if there is a nonzero vector field $W\in \mathfrak{X}(\widetilde{M})$ such that its torsion is
\begin{equation*}
\widetilde{T}(X,Y)=\tilde{g}(Y,W)X-\tilde{g}(W,X)Y, \quad \forall X,Y \in\mathfrak{X}(\widetilde{M}).
\end{equation*}
See \cite{fs}. We also call $\widetilde{\nabla}$ a {\it semi-symmetric non-metric connection} ({\it snm-connection}) if $\widetilde{\nabla}\tilde{g}\neq0$ \cite{ag,ac}.

Let  $\widetilde{\nabla}$ be a snm-connection on $(\widetilde{M},\tilde{g})$. If $\widetilde{\nabla}^0$  is the Levi-Civita connection, then   $\widetilde{\nabla}$ and $\widetilde{\nabla}^0$ are related by    
$$
\widetilde{\nabla}_XY=\widetilde{\nabla}^0_XY+\tilde{g}(W,Y)X, \quad \forall X,Y \in \mathfrak{X}(\widetilde{M}).
$$
 Also, there is a relation between $\widetilde{R}$ and the Riemannian curvature tensor $\widetilde{R}^0$ of $\widetilde{\nabla}^0$. For orthonormal vectors $e_1,e_2\in T_p\widetilde{M}$, $p\in\widetilde{M}$, we have 
$$
\tilde{g}(\widetilde{R}(e_1,e_2)e_2,e_1)=\tilde{g}(\widetilde{R}^0(e_1,e_2)e_2,e_1)-e_2(\tilde{g}(W,e_2))+\tilde{g}(W,\widetilde{\nabla}^0_{e_2}e_2)+\tilde{g}(W,e_2)^2. 
$$
Although the first term at the right side is the sectional curvature of the plane section $\pi=\text{span}\{e_1,e_2\}$ with respect to $\widetilde{\nabla}^0$, the term at the left side depends on the choice of the basis of $\pi$. Therefore,   $\tilde{g}(\widetilde{R}(e_1,e_2)e_2,e_1)$ does not stand for a sectional curvature of the plane $\pi$. The following geometric quantity  was proposed in \cite{am0} as the sectional curvature $\widetilde{K}$ with respect to $\widetilde{\nabla}$, 
\begin{equation}\label{kk}
\widetilde{K}(\pi)=\frac{1}{2}\left( \tilde{g}(\widetilde{R}(e_1,e_2)e_2,e_1)+\tilde{g}(\widetilde{R}(e_2,e_1)e_1,e_2)\right),
\end{equation}
which is proved to be independent of the orthonormal basis of $\pi$.

\begin{definition} \label{def-sec-cur}
Let  $\widetilde{\nabla}$ be   a snm-connection on $(\widetilde{M},\tilde{g})$. If  $p\in\widetilde{M}$ and  $\pi$ is a plane in $ T_p\widetilde{M}$, we call $\widetilde{K}(\pi)$   the {\it sectional curvature} of   $\pi$ with respect to $\widetilde{\nabla}$. 
\end{definition}

In case that $\{e_1,e_2\}$ is a basis of $\pi$, which it is not necessarily orthonormal, it is immediate that  
\begin{equation}\label{kk2}
\widetilde{K}(\pi)=\frac{\tilde{g}(\widetilde{R}(e_1,e_2)e_2,e_1)+\tilde{g}(\widetilde{R}(e_2,e_1)e_1,e_2)}{2( \tilde{g}(e_1,e_1)\tilde{g}(e_2,e_2)-\tilde{g}(e_1,e_2)^2 )}.
\end{equation}
 
 As an example,  we   particularize to the case that $\widetilde{M}$ is the Euclidean space $\r^3$ calculating the sectional curvature of planes with respect to the canonical snm-connection.
 \begin{proposition}
  Let $\widetilde{\nabla}$ be the canonical snm-connection  on $\r^3$ defined in  \eqref{nmc} and $\{\vec{u},\vec{v}\}$ an orthonormal basis of a plane $P$ in $\r^3$. Then the sectional curvature of $P$ is
$$
K(P)=K(\vec{u},\vec{v})=\frac{1}{2}(u_3^2+v_3^2),
$$
where $u_3$ and $v_3$ are the third components of $\vec{u}$ and $\vec{v}$ with respect to the canonical basis.
\end{proposition}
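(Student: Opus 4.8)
The plan is to exploit the fact that the underlying Levi--Civita connection $\widetilde{\nabla}^0$ on $\r^3$ is flat, so that $\widetilde{R}^0=0$, and that each of the two summands appearing in \eqref{kk} can be evaluated separately by means of the relation between $\widetilde{R}$ and $\widetilde{R}^0$ recalled above. Since the value of a curvature tensor at a point depends only on the values of its vector field arguments at that point and not on their extensions, I would first extend the orthonormal basis $\{\vec u,\vec v\}$ to the constant (hence $\widetilde{\nabla}^0$-parallel) vector fields on $\r^3$ taking those constant values. This is the decisive simplification: with such extensions every covariant derivative with respect to $\widetilde{\nabla}^0$ vanishes and every inner product $\langle W,\cdot\rangle$ is a constant function.

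Concretely, I would apply the displayed relation with $e_1=\vec u$ and $e_2=\vec v$. The term $\tilde{g}(\widetilde{R}^0(e_1,e_2)e_2,e_1)$ is zero by flatness; the term $e_2(\tilde{g}(W,e_2))$ vanishes because $\tilde{g}(W,e_2)=\langle\partial_z,\vec v\rangle=v_3$ is constant; and the term $\tilde{g}(W,\widetilde{\nabla}^0_{e_2}e_2)$ vanishes because $\vec v$ is parallel. What survives is exactly $\tilde{g}(W,e_2)^2=v_3^2$, so $\tilde{g}(\widetilde{R}(\vec u,\vec v)\vec v,\vec u)=v_3^2$. Interchanging the roles of the two basis vectors in the same relation gives $\tilde{g}(\widetilde{R}(\vec v,\vec u)\vec u,\vec v)=u_3^2$ by the identical argument.

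Substituting these two values into \eqref{kk} then yields $K(P)=\tfrac12(v_3^2+u_3^2)$, as claimed. As an independent check I would also run the direct route: for constant fields one has $\widetilde{\nabla}_X Y=\langle\partial_z,Y\rangle X=Y_3 X$, whence $\widetilde{R}(X,Y)Z=Z_3(Y_3X-X_3Y)$, and pairing this with $\vec u$ and $\vec v$ reproduces the same two numbers once the orthonormality relations $\langle\vec u,\vec v\rangle=0$ and $\langle\vec u,\vec u\rangle=\langle\vec v,\vec v\rangle=1$ are used. I do not expect any genuine obstacle here: the single point requiring care is the justification that one may replace $\vec u,\vec v$ by parallel extensions, which is precisely the tensoriality of $\widetilde{R}$ in all three arguments; once that is granted, the remainder is purely algebraic.
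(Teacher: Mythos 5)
Your proposal is correct, but your primary argument is not the one the paper uses. The paper proceeds by direct computation: it tabulates the covariant derivatives $\widetilde{\nabla}_{\partial_i}\partial_j$ of the canonical frame, then computes $\widetilde{\nabla}_{\vec{u}}\vec{v}$, the second covariant derivatives, and the curvature vectors $\widetilde{R}(\vec{u},\vec{v})\vec{v}$, $\widetilde{R}(\vec{v},\vec{u})\vec{u}$ for constant extensions, and finally pairs them with $\vec{u},\vec{v}$ --- this is exactly the ``independent check'' you sketch at the end. (Your closed form $\widetilde{R}(X,Y)Z=Z_3(Y_3X-X_3Y)$ is right, and in fact it corrects two small typos in the paper's displayed curvature vectors, where a factor $v_3$ is dropped from the coefficients $u_3v_3$; the resulting inner products are unaffected because of orthonormality.) Your main route is different: you invoke the relation, quoted in Sect. \ref{prel} from \cite{am0}, between $\tilde{g}(\widetilde{R}(e_1,e_2)e_2,e_1)$ and the Levi-Civita data, and observe that for constant extensions in the flat space $\r^3$ with constant $W=\partial_z$ every term except $\tilde{g}(W,e_2)^2$ vanishes. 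This is more economical --- no second covariant derivatives are needed --- and it makes the structure of the answer transparent: each summand of \eqref{kk} reduces to the square of the $\partial_z$-component of one basis vector, and the same argument yields $K(P)=\frac{1}{2}\left(\langle W,\vec{u}\rangle^2+\langle W,\vec{v}\rangle^2\right)$ for an arbitrary constant vector field $W$. What the paper's computation buys in exchange is self-containedness (no reliance on the formula imported from \cite{am0}) and an explicit curvature tensor, which serves as the template for the surface computations of Sect. \ref{compt}. Your tensoriality remark --- that $\widetilde{R}$ is $C^\infty$-linear in all three slots even for a non-metric connection, so evaluating on constant extensions is legitimate --- is the correct justification of the one delicate step, and it is a point the paper passes over in silence.
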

\begin{proof}
For the covariant derivatives, we use the formula in \eqref{nmc}, 
\begin{equation*}
\begin{array}{lll}
\widetilde{\nabla}_{\partial_x}\partial_x=0, &\widetilde{\nabla}_{\partial_x}\partial_y=0,&\widetilde{\nabla}_{\partial_x}\partial_z=\partial_x,\\
\widetilde{\nabla}_{\partial_y}\partial_x=0, &\widetilde{\nabla}_{\partial_y}\partial_y=0,&\widetilde{\nabla}_{\partial_y}\partial_z=\partial_y,\\
\widetilde{\nabla}_{\partial_z}\partial_x=0, &\widetilde{\nabla}_{\partial_z}\partial_y=0,&\widetilde{\nabla}_{\partial_z}\partial_z=\partial_z.\\
\end{array}
\end{equation*}
Set
$$
\vec{u}=u_1\partial_x+u_2\partial_y+u_3\partial_z, \quad \vec{v}=v_1\partial_x+v_2\partial_y+v_3\partial_z.
$$
Therefore we calculate 
\begin{equation*}
\begin{array}{lll}
\widetilde{\nabla}_{\vec{u}}\vec{u}=u_3\vec{u}, &\widetilde{\nabla}_{\vec{u}}\vec{v}=v_3\vec{u},\\
\widetilde{\nabla}_{\vec{v}}\vec{u}=u_3\vec{v}, &\widetilde{\nabla}_{\vec{v}}\vec{v}=v_3\vec{v}
\end{array}
\end{equation*}
and
\begin{equation*}
\begin{array}{lll}
\widetilde{\nabla}_{\vec{u}}\widetilde{\nabla}_{\vec{v}}\vec{v}=v_3^2\vec{u}, &\widetilde{\nabla}_{\vec{v}}\widetilde{\nabla}_{\vec{u}}\vec{v}=u_3v_3\vec{v},\\
\widetilde{\nabla}_{\vec{v}}\widetilde{\nabla}_{\vec{u}}\vec{u}=u_3^2\vec{v}, &\widetilde{\nabla}_{\vec{u}}\widetilde{\nabla}_{\vec{v}}\vec{u}=u_3v_3\vec{u}.
\end{array}
\end{equation*}
Moreover, it is obviously $[\vec{u},\vec{v}]=0$. In consequence, the curvature tensors of $\widetilde{\nabla}$ are
\begin{equation*}
\begin{array}{l}
\widetilde{R}(\vec{u},\vec{v})\vec{v}=v_3^2\vec{u}-u_3\vec{v},\\
\widetilde{R}(\vec{v},\vec{u})\vec{u}=u_3^2\vec{v}-u_3\vec{u}.
\end{array}
\end{equation*}
Thus 
$$\widetilde{g}(\widetilde{R}(\vec{u},\vec{v})\vec{v},\vec{u})=v_3^2,\quad \widetilde{g}(\widetilde{R}(\vec{v},\vec{u})\vec{u},\vec{v})=u_3^2.$$
This gives the result and concludes the proof.  
\end{proof}

As a conclusion, the sectional curvatures of the coordinate planes in $\r^3$ are
 $$
\widetilde{K}(\partial_x,\partial_y)=0, \quad \widetilde{K}(\partial_x,\partial_z)=\widetilde{K}(\partial_y,\partial_z)=\frac{1}{2}.
$$

Let   $M$ be a submanifold of $\widetilde{M}$ with $\text{dim}M>1$. The formulas of Gauss for $\widetilde{\nabla}$ and $\widetilde{\nabla}^0$ are, repsectively, 
\begin{equation} \label{gauss-form}
\begin{split}
\widetilde{\nabla}_XY&=\nabla_XY+h(X,Y),\\
  \widetilde{\nabla}^0_XY&=\nabla^0_XY+h^0(X,Y),
  \end{split}
\end{equation}
where $X,Y \in \mathfrak{X}(M)$,  $\nabla$ and $\nabla^0$ are the induced connections on $M$ and $h$ a $(0,2)$-tensor field on $M$.  We see that  $h$ coincides with the  second fundamental form $h^0$ of $M$ (\cite{ac}).

\begin{proposition} Let $M$ be a submanifold of $(\widetilde{M},\widetilde{g})$ and let $\widetilde{\nabla}$ be a snm-connection. Then $h=h^0$.
\end{proposition}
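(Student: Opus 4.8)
The plan is to read off the identity $h=h^0$ by comparing the two Gauss formulas in \eqref{gauss-form} through the relation $\widetilde{\nabla}_XY=\widetilde{\nabla}^0_XY+\widetilde{g}(W,Y)X$ that links the snm-connection to the Levi-Civita connection. First I would fix $X,Y\in\mathfrak{X}(M)$ tangent to $M$ and insert this relation into the left-hand side of the Gauss formula for $\widetilde{\nabla}$, so that $\nabla_XY+h(X,Y)=\widetilde{\nabla}^0_XY+\widetilde{g}(W,Y)X$. Then I would replace $\widetilde{\nabla}^0_XY$ by its own Gauss decomposition $\nabla^0_XY+h^0(X,Y)$, obtaining
$$
\nabla_XY+h(X,Y)=\nabla^0_XY+\widetilde{g}(W,Y)X+h^0(X,Y).
$$

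The crucial observation is that the correction term $\widetilde{g}(W,Y)X$ is a scalar function times the vector $X$; since $X$ is tangent to $M$, this term lies entirely in $T_pM$ and contributes nothing to the normal bundle. Consequently the right-hand side already displays its tangential/normal splitting, with tangential part $\nabla^0_XY+\widetilde{g}(W,Y)X$ and normal part exactly $h^0(X,Y)$. I would emphasize here the one point that requires a moment's care: the vector field $W$ need not be tangent to $M$, but this is irrelevant, because $W$ enters only through the scalar $\widetilde{g}(W,Y)$ multiplying the tangent vector $X$, so the whole correction term stays tangential regardless of $W$.

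Finally, since $T_p\widetilde{M}=T_pM\oplus T_p^\perp M$ is a direct sum, the tangential and normal components of the displayed identity may be matched separately and uniquely. Equating the normal parts yields $h(X,Y)=h^0(X,Y)$ for all tangent $X,Y$, which is precisely the claim; equating the tangential parts gives, as a byproduct worth recording, the relation $\nabla_XY=\nabla^0_XY+\widetilde{g}(W,Y)X$ between the two induced connections on $M$. The argument is short and the only genuine subtlety is recognizing that the non-metric correction is tangential; no further computation is needed.
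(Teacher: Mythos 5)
Your proof is correct and follows essentially the same route as the paper: substitute the relation $\widetilde{\nabla}_XY=\widetilde{\nabla}^0_XY+\widetilde{g}(W,Y)X$ into the two Gauss formulas and compare tangential and normal parts, noting that the correction term $\widetilde{g}(W,Y)X$ is tangential. Your explicit remark that $W$ itself need not be tangent (only the scalar $\widetilde{g}(W,Y)$ enters) is a nice clarification the paper leaves implicit, but the argument is the same.
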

\begin{proof} From the definition of $\widetilde{\nabla}$, if $X,Y\in\mathfrak{X}(M)$, we have from the second equation of \eqref{gauss-form}
$$\widetilde{\nabla}_XY=\widetilde{\nabla}_X^0Y+\langle W,Y\rangle X=\nabla^0_XY+\langle W,Y\rangle X+h^0(X,Y).$$
By taking the first equation of \eqref{gauss-form} and comparing the tangent and normal parts with respect to $M$, we deduce 
\begin{equation*}
\begin{split}
\nabla_XY&=\nabla_X^0Y+\langle W,Y\rangle X,\\
h(X,Y)&=h^0(X,Y).
\end{split}
\end{equation*}
This proves the result. 
\end{proof}

Consider the decomposition of   the vector field $W$ that defines $\widetilde{\nabla}$ in its tangential and normal components with respect to   $M$,
$$
W=W^{\top}+W^{\perp}, \quad W^{\top}\in \mathfrak{X}(M), \quad W^{\perp}\in \mathfrak{X}(M)^{\perp}.
$$
Let $g$ be the induced metric   on $M$. If $X,Y,Z,U\in\mathfrak{X}(M)$, then the  Gauss equation with respect to $\widetilde{\nabla}$ is  (\cite{ac})
\begin{equation} \label{gauss-eq}
\begin{split}
g(R(X,Y)Z,U)&=\tilde{g}(\widetilde{R}(X,Y)Z,U)-\tilde{g}(h(X,Z),h(Y,U))\\
&+\tilde{g}(h(X,U),h(Y,Z))  -\tilde{g}(W^{\perp},h(Y,Z))g(X,U)\\
&+\tilde{g}(W^{\perp},h(X,Z))g(Y,U).
\end{split}%
\end{equation}

\section{Sectional curvature of translation surfaces}\label{compt}

Let $M$ be a   surface in $\r^3$. Since the dimension of $M$ is $2$, there is only one tangent plane and, consequently, we simply write $K$ to denote the sectional curvature of $M$.  In this section, we compute the sectional curvature $K$ of a translation surface with respect to the canonical snm-connection $\widetilde{\nabla}$. In principle, there are three types of translation surfaces depending on the choice of two coordinate planes. Since the canonical snm-connection $\widetilde{\nabla}$   utilizes the vector field $\partial_z$, then the roles of the coordinates $x$ and $y$ can be interchanged. Therefore, after renaming the coordinates $x$ and $y$, if necessary,  there are only two types of translations surfaces,  namely,  of type $z=f(x)+g(y)$ and   of type $x=f(y)+g(z)$.

\subsection{Translation surfaces of type $z=f(x)+g(y)$} 
Assume that $f\colon I\to\r$ and $g\colon J\to \r$ are two smooth functions. To simplify the notation, we will use $f'$, $f''$, $g'$ $g''$ and so on, understanding which is the  variable in each case.  A basis $\{e_1,e_2 \}$ of $\mathfrak{X}(M)$ is
$$
e_1=\partial_x+f'\partial_z, \quad e_2=\partial_y+g'\partial_z.
$$
The coefficients of the first fundamental form are
\begin{equation} \label{firs-coef}
E=1+f'^2, \quad F=f'g', \quad G=1+g'^2.
\end{equation} 

If we  set 
$$w=EG-F^2=1+f'^2+g'^2,$$
then the unit normal vector field $\xi$ of $M$ is
$$
\xi=\frac{1}{\sqrt{w}}(-f'\partial_x-g'\partial_y+\partial_z).
$$

By a direct computation from the definition of $\widetilde{\nabla}$ in \eqref{nmc}, the covariant derivatives are
\begin{equation*}
\begin{array}{ll}
\widetilde{\nabla}_{e_1}e_1=f'e_1+f''\partial_z, & \widetilde{\nabla}_{e_1}e_2=g'e_1,\\
\widetilde{\nabla}_{e_2}e_1=f'e_2 ,&
\widetilde{\nabla}_{e_2}e_2=g'e_2 +g''\partial_z.
\end{array}
\end{equation*}
From  \eqref{tors} the torsion is $\widetilde{T}(e_1,e_2)=g'e_1-f'e_2$ and hence we have 
$$
[e_1,e_2]=\widetilde{\nabla}_{e_1}e_2-\widetilde{\nabla}_{e_2}e_1-\widetilde{T}(e_1,e_2)=0.
$$

The coefficients of the $(0,2)$-tensor field $h$ given in the formula of Gauss coincide with the second fundamental form $h^0$ of the Levi-Civita connection $\widetilde{\nabla}^0$. Thus 
\begin{equation} \label{sec-coef}
h_{11}=\frac{f''}{\sqrt{w}}, \quad h_{12}=h_{21}=0, \quad h_{22}=\frac{g''}{\sqrt{w}}.
\end{equation} 
The covariant derivatives of second order are
\begin{equation*}
\begin{array}{ll}
\widetilde{\nabla}_{e_1}\widetilde{\nabla}_{e_2}e_2=( g''+g'^2 ) e_1,&\widetilde{\nabla}_{e_2}\widetilde{\nabla}_{e_1}e_1=( f''+f'^2 ) e_2,\\
\widetilde{\nabla}_{e_2}\widetilde{\nabla}_{e_1}e_2=g''e_1+f'g'e_2 ,&
\widetilde{\nabla}_{e_1}\widetilde{\nabla}_{e_2}e_1=f'g'e_1 +f''e_2.
\end{array}
\end{equation*}

The curvature tensor $\widetilde{R}$ necessary for the computation of $\widetilde{K}$ is
\begin{eqnarray*}
\widetilde{R}(e_1,e_2)e_2&=&g' ( g'e_1-f'e_2), \\
 \widetilde{R}(e_2,e_1)e_1&=&-f'(g'e_1-f'e_2).
\end{eqnarray*}
Thus
\begin{equation*}
\begin{split}
\langle \widetilde{R}(e_1,e_2)e_2, e_1 \rangle &=g'^2, \\
\langle \widetilde{R}(e_2,e_1)e_1, e_2 \rangle &=f'^2.
\end{split}
\end{equation*}

The Gauss equation given by \eqref{gauss-eq} is written by
\begin{equation*}
\begin{split}
\langle R(e_1,e_2)e_2, e_1 \rangle &=\langle \widetilde{R}(e_1,e_2)e_2, e_1 \rangle 
+h_{11}h_{22}-\frac{h_{22}}{\sqrt{w}}E \\
&=g'^2+\frac{g''(f''-f'^2-1)}{w},\\
\langle R(e_2,e_1)e_1, e_2 \rangle &=\langle \widetilde{R}(e_2,e_1)e_1, e_2 \rangle 
+h_{11}h_{22}-\frac{h_{11}}{\sqrt{w}}G\\
&=f'^2+\frac{f''(g''-g'^2-1)}{w}.
\end{split}
\end{equation*}

Since $\{e_1,e_2\}$ is not an orthonormal basis, we use formula \eqref{kk2} for the computation of the sectional curvature $K$ of $M$, obtaining 
\begin{equation*}
\begin{split}
K=K(e_1,e_2)&=\frac{\langle R(e_1,e_2)e_2,e_1\rangle+\langle R(e_2,e_1)e_1,e_2)\rangle}{2w}\\
&=\frac{f'^2+g'^2}{2w}+\frac{g''(f''-f'^2-1)+f''(g''-g'^2-1)}{2w^2}.
\end{split}
\end{equation*}
This identity writes as
\begin{equation} \label{sec-curv-type1}
 2w^2K=w(f'^2+g'^2) +
f''(g''-g'^2-1)+g''(f''-f'^2-1).
\end{equation}

\subsection{Translation surfaces of type $x=f(y)+g(z)$}   We do calculations in analogy to the previous case. A basis $\{ e_1,e_2\}$ of $\mathfrak{X}(M)$ is
$$
e_1=f'\partial_x+\partial_y, \quad e_2=g'\partial_x+\partial_z,
$$
and let $w=EG-F^2=1+f'^2+g'^2$. The unit normal   of $M$ is
$$
\xi=\frac{1}{\sqrt{w}}(\partial_x-f'\partial_y-g'\partial_z).
$$
The covariant derivatives are given by
\begin{equation*}
\begin{array}{ll}
\widetilde{\nabla}_{e_1}e_1=f''\partial_x, &\widetilde{\nabla}_{e_1}e_2=e_1 ,\\
\widetilde{\nabla}_{e_2}e_1=0 ,&\widetilde{\nabla}_{e_2}e_2=e_2+g''\partial_x.
\end{array}
\end{equation*}
The torsion is computed by $\widetilde{T}(e_1,e_2)=e_1$ and so the bracket is $[e_1,e_2]=0.$  Also, we have
\begin{equation*}
\begin{array}{ll}
\widetilde{\nabla}_{e_1}\widetilde{\nabla}_{e_2}e_2=e_1,&\widetilde{\nabla}_{e_2}\widetilde{\nabla}_{e_1}e_2=0,\\
\widetilde{\nabla}_{e_2}\widetilde{\nabla}_{e_1}e_1=0,&\widetilde{\nabla}_{e_1}\widetilde{\nabla}_{e_2}e_1=0.
\end{array}
\end{equation*}
Hence, we obtain
\begin{eqnarray*}
\widetilde{R}(e_1,e_2)e_2&=&e_1, \\  
\widetilde{R}(e_2,e_1)e_1&=&0.
\end{eqnarray*}

By the Gauss equation we get
\begin{equation*}
\begin{split}
\langle R(e_1,e_2)e_2, e_1 \rangle &=\langle \widetilde{R}(e_1,e_2)e_2, e_1 \rangle 
+h_{11}h_{22}-\frac{h_{22}}{\sqrt{w}}E \\
&=1+f'^2+\frac{g''(f''-f'^2-1)}{w},\\
\langle R(e_2,e_1)e_1, e_2 \rangle &=\langle \widetilde{R}(e_2,e_1)e_1, e_2 \rangle 
+h_{11}h_{22}-\frac{h_{11}}{\sqrt{w}}G \\
&=\frac{f''(g''-g'^2-1)}{w}.
\end{split}
\end{equation*}
Then, the sectional curvature $K$ of $M$ satisfies the equation
\begin{equation} \label{sec-curv-type2}
2w^2K=w(1+f'^2)+f''(g''-g'^2-1)+g''( f''-f'^2-1).
\end{equation}

\section{Proof of Theorem \ref{mr}}\label{main-proof}

Suppose that  $M$ is a translation surface in $\r^3$ with constant sectional curvature $K=K_0/2$. We will prove that $f$ or $g$ is a linear function, which implies that $M$ is a generalized cylinder. The proof of the theorem is by contradiction. Suppose that $f''g''\not=0$ and $f'g'\not=0$ in a subdomain of $I\times J$. Because $M$ can be given in two different forms, we separate the proof in two cases. 

\subsection{Case $z=f(x)+g(y)$.}

Since $f'^2+g'^2=w-1$, then Eq. \eqref{sec-curv-type1} writes as 
\begin{equation} \label{Kconst-00}
w^2K_0=w(w-1) +f''(g''-g'^2-1)+g''(f''-f'^2-1).
\end{equation}
Since the roles of $f$ and $g$ in  \eqref{Kconst-00} are symmetric, the arguments for   one of $f$ and $g$ are also valid for the other.  We discuss  two subcases:

{\bf Subcase  $K_0=1$}. Then   \eqref{Kconst-00} is 
$$w=2f''g''-(1+g'^2)f''-(1+f'^2)g''.$$
Dividing by $f''g''$, we write
\begin{equation} \label{Kconst-02}
\frac{w}{f''g''} +\frac{1+g'^2}{g''}+\frac{1+f'^2}{f''}=2.
\end{equation}
Differentiating  identity \eqref{Kconst-02} with respect to $x$ and next with respect to $y$, we have
$$\left(\frac{w}{f''g''}\right)_{xy}=0.$$
This yields
\begin{equation} \label{Kconst-03}
2f'f''^2g'''+2g'g''^2f'''-wg'''f'''=0.
\end{equation}
If $f'''=0$ and $g'''=0$ identically, then   $f''=c_1^{-1}$ and $g''=c_2^{-1}$, for nonzero constants $c_1,c_2$. This allows us to write   Eq. \eqref{Kconst-02} as  
$$
c_1c_2+c_1+c_2-2+c_1(c_2+1)f'^2+c_2(c_1+1)g'^2=0.
$$
It implies $c_1+1=c_2+1=c_1c_2+c_1+c_2-2=0$, which is not possible. 

Hence we have $f'''g'''\neq0$. If we go back to Eq. \eqref{Kconst-03}, then 
$$
1=\frac{2f'f''^2}{f'''}-f'^2+\frac{2g'g''^2}{g'''}-g'^2.
$$
There are some nonzero constants $c_3$ and $c_4$ such that $c_3+c_4=1$ and
$$
\frac{2f'f''^2}{f'''}-f'^2=c_3, \quad \frac{2g'g''^2}{g'''}-g'^2=c_4,
$$
or equivalently
$$
\frac{2f'f''}{c_3+f'^2}=\frac{f'''}{f''}, \quad \frac{2g'g''}{c_4+g'^2}=\frac{g'''}{g''}.
$$
Integrating once, we get
$$
f''=c_5(c_3+f'^2), \quad g''=c_6(c_4+g'^2),
$$
for some nonzero constants $c_5$ and $c_6$. Substituting into \eqref{Kconst-02}, we obtain a polynomial equation of $f'$ and $g'$   of the form
$$
A_0+A_1f'^2+A_2g'^2+A_3f'^2g'^2=0,
$$
where  
\begin{equation*}
\begin{split}
A_0&=1+c_3c_5+c_4c_6-2c_3c_4c_5c_6, \\
A_1&=1+c_5+c_4c_6-2c_4c_5c_6, \\
A_2&=1+c_3c_5+c_6-2c_3c_5c_6, \\
A_3&=c_5+c_6-2c_5c_6.
\end{split}
\end{equation*}
Since all coefficients $A_i$ must vanish, then the combinations  $A_0-A_1=0$ and $A_0-A_2=0$ give $c_3c_5=\frac12=c_4c_6.$ On the other hand,  
$$
0=A_1+A_2=3+c_5+c_6-2c_5c_6=3+A_3.
$$
This gives $A_3=-3$, a contradiction.

{\bf Subcase $K_0\neq1$}. We introduce new functions 
$$P=P(f)=f'^2,\quad Q=Q(g)=g'^2.$$
 Then $P'=2f''$ and $Q'=2g''$.   Equation \eqref{Kconst-00} is now
\begin{equation} \label{Kconst-04}
\left. 
\begin{array}{l}
4\left( K_0+(2K_0-1)(P +Q )+(K_0-1)(P^2+Q^2)+2(K_0-1)PQ \right ) \\
=P' (Q'-2Q-2 )+Q'(P'-2P-2).
\end{array}%
\right. 
\end{equation}
Obviously, $P''$ cannot be zero identically, because otherwise \eqref{Kconst-04} would be a polynomial on $P$ of degree $2$ whose leading coefficient is $4(K_0-1)\not=0$. By the symmetry, $Q'' \neq 0$. Taking successive   derivatives of identity \eqref{Kconst-04} with respect to $f$ and $g$, we obtain
\begin{equation} \label{Kconst-05}
8(K_0-1)P'Q'=P''(Q''-2Q' )+Q''(P''-2P').
\end{equation}
Claim:  $P''$   is not proportional to $P'$. The proof of the claim is by contradiction. If $P''=c_1P'$, $c_1\neq0$, then a first integration follows $P'=c_1P+d_1$, for some constant $d_1$. Inserting in \eqref{Kconst-04} we have a polynomial on $P$ with non-vanishing leading coefficient, which is a contradiction. By the symmetry we also conclude that $Q''$ is not proportional to $Q'$. 

Once we have established the claim, let us divide    \eqref{Kconst-05} by $P''Q''$, obtaining
$$
8(K_0-1)\frac{P'Q'}{P''Q''}=P''+Q''-2(P'+Q').
$$
Finally, the successive   derivatives with respect to $f$ and $g$ yield
$$
\left(\frac{P'}{P''}\right)'\left(\frac{Q'}{Q''}\right)'=0,
$$
which implies $P'/P''$ or $Q'/Q''$ are constant functions, contrary to the Claim. This finishes the proof when the translation surface is $z=f(x)+g(y)$.

\subsection{Case $ x=f(y)+g(z)$.} 

Equation \eqref{sec-curv-type2} is now
\begin{equation} \label{Kconst-10}
w^2K_0=w(1+f'^2)+f''(g''-g'^2-1)+g''(f''-f'^2-1).
\end{equation}
As opposed to the previous subsection, the roles of $f$ and $g$ in   \eqref{Kconst-10} are not symmetric.  Dividing  \eqref{Kconst-10} by $1+f'^2$, we have 
\begin{equation} \label{Kconst-11}
w(K_0-1)+K_0g'^2+g''+K_0\frac{g'^4}{1+f'^2} = 
\frac{f''(2g''-g'^2-1)}{1+f'^2}.
\end{equation}
We discuss three subcases:

{\bf Subcase   $K_0=0$}. Equation \eqref{Kconst-11} is
\begin{equation} \label{Kconst-12}
-1-f'^2-g'^2+g'' =\frac{f''(2g''-g'^2-1)}{1+f'^2}.
\end{equation}
If we differentiate  \eqref{Kconst-12} with respect to $y$ and $z$, one gets
$$
0=\left(\frac{f''}{1+f'^2}\right)'(2g''-g'^2)'.
$$
If $(\frac{f''}{1+f'^2})'=0$ identically, then \eqref{Kconst-12} would be a polynomial on $f'$ of degree $2$ with non-vanishing leading coefficient, which is not possible. Then $(2g''-g'^2)'=0$ identically, that is,  $2g''=c+g'^2$, for some constant $c$. Replacing in Eq.  \eqref{Kconst-12} we have   a polynomial on $g'$ of degree $2$ with non-vanishing leading coefficient. This is a contradiction again.

{\bf Subcase  $K_0=1$}. Then, \eqref{Kconst-11} is
\begin{equation} \label{Kconst-13}
g'^2+g''+\frac{g'^4}{1+f'^2} = 
\frac{f''(2g''-g'^2-1)}{1+f'^2}.
\end{equation}
We differentiate   \eqref{Kconst-13} with respect to $y$ and $z$, obtaining
$$
-8g'^3g''\frac{f'f''}{(1+f'^2)^2}=(2g''-g'^2)'\left (\frac{f''}{1+f'^2} \right )'.
$$
 Therefore there are    nonzero constants $c_1,c_2$ such that $c_1c_2=1$ and
$$
\left (\frac{f''}{1+f'^2} \right )'=c_1\left (\frac{1}{1+f'^2} \right )', \quad (2g''-g'^2)'=c_2(g'^4)' .
$$
Integrating both equations, we deduce
$$
f''=c_1+d_1(1+f'^2), \quad 2g''=c_2g'^4+g'^2+d_2,
$$
for some constants $d_1,d_2$. Inserting in \eqref{Kconst-13}, we obtain
$$
\frac{d_2}{2}+\frac{3}{2}g'^2+\frac{c_2}{2}g'^4 = 
\frac{c_1(d_2-1)}{1+f'^2}+d_1(c_2g'^4+d_2-1).
$$
This is a polynomial on $g'$ whose coefficient of the term of degree $2$ is $\frac32$: a contradiction.

{\bf Subase  $K_0\notin \{0,1\}$}. By the successive   derivatives of \eqref{Kconst-11} with respect to $y$ and $z$, we obtain
$$
K_0\left (\frac{1}{1+f'^2} \right)'(g'^4)' =\left (\frac{f''}{1+f'^2}\right)'(2g''-g'^2)'.
$$
In analogy to the previous subcase we may conclude
$$
f''=c_1+d_1(1+f'^2), \quad 2g''=c_2g'^4+g'^2+d_2.
$$
Substituting into \eqref{Kconst-11}, we have a polynomial equation on $f'$ of degree $4$ with leading coefficient $K_0-1\not=0$, which is not possible.
 
\section{Generalized cylinders with constant sectional curvature} \label{cyln}

In Thm. \ref{mr} we have proved that if $M$ is a translation surface $z=f(x)+g(y)$ or $x=f(y)+g(z)$ with    $K$ constant, then one of the functions $f$ or $g$ is linear. In particular, $M$ is a generalized cylinder whose rulings, depending on the case, are parallel to one of the coordinate axes. In this section we complete the classification of the translation surfaces with   $K$ constant giving a full description of these cylinders. 

  For the surface $z=f(x)+g(y)$, we will assume, without loss of generality, that the function $g$ is linear. After a vertical translation, which it does not affect the value of $K$, let $g(y)=ay$. However, as previously mentioned, in case $x=f(y)+g(z)$ the roles of $f$ and $g$ are not symmetric. In particular, we have to distinguish if $f$ or $g$ are linear functions. 
  
  \subsection{Generalized cylinders $z=f(x)+ay$}

Let $M$ be a generalized cylinder of type $z=f(x)+ay$ and assume that the sectional curvature of $M$ is  constant, $K=K_0/2$. 
\begin{theorem} \label{t51}
If a generalized cylinder in $\r^3$ of type $z=f(x)+ay$ has constant sectional curvature $K_0/2$ with respect to the canonical snm-connection, then it is either a plane $(K_0=\frac{a^2}{1+a^2})$, or a grim reaper cylinder $(K_0=1)$, or the function $f$ satisfies the integral equation
\begin{equation}\label{f51}
x=\pm \int^{f}\left(\frac{cK_0-e^{\frac{2u}{1+a^2}}-c}{e^{\frac{2u}{1+a^2}}-cK_0}\right)^{1/2}du, \quad K_0\in (0,1) \cup (1,\infty), \quad c>0,
\end{equation}
where the maximal domain is
\begin{enumerate}
\item $f\in(-\infty,(1+a^2)\log\sqrt{cK_0})$ if $0<K_0<1$,

\item $f\in ((1+a^2)\log \sqrt{c(K_0-1)},(1+a^2)\log \sqrt{cK_0})$ if $K_0>1$.
\end{enumerate}
\end{theorem}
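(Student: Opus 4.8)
The plan is to feed the profile $g(y)=ay$, for which $g'=a$ and $g''=0$, into the curvature identity \eqref{sec-curv-type1}. With $w=1+a^2+f'^2$ and $K=K_0/2$ the identity collapses to
\begin{equation*}
w^2K_0=w(f'^2+a^2)-(1+a^2)f'',
\end{equation*}
and, after replacing $f'^2+a^2$ by $w-1$, to the autonomous second-order equation
\begin{equation*}
(1+a^2)f''=w\big((1-K_0)w-1\big),\qquad w=1+a^2+f'^2.
\end{equation*}
The degenerate solutions can be read off immediately: if $f''\equiv0$ then $f$ is affine and $M$ is a plane, and the right-hand side must vanish; for the horizontal profile $f\equiv\text{const}$ (the plane $z=ay$) this gives $w=1+a^2$ and hence $K_0=a^2/(1+a^2)$, which is the first case in the statement. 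From here on I assume $f''\not\equiv0$.

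The next step is a reduction of order. The equation is autonomous in $x$ because its right-hand side depends on $x$ only through $f'$; regarding $f'$ as a function of $f$ and using $\tfrac{d}{df}(f'^2)=2f''$, i.e. $\tfrac{dw}{df}=2f''$, the equation becomes the first-order separable equation
\begin{equation*}
\frac{1+a^2}{2}\,\frac{dw}{df}=w\big((1-K_0)w-1\big).
\end{equation*}
Its integration bifurcates on the factor $1-K_0$. If $K_0=1$ the bracket is simply $-1$ and one integration yields $w=c_0\,e^{-2f/(1+a^2)}$ with $c_0>0$, so that $f'^2=c_0\,e^{-2f/(1+a^2)}-(1+a^2)$; this is the profile of the grim reaper cylinder, the second exceptional case. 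If $K_0\ne1$ I would split $1/\big(w[(1-K_0)w-1]\big)$ into partial fractions and integrate, which expresses $w$, and therefore $f'^2$, as an explicit function of $E:=e^{2f/(1+a^2)}$ whose numerator and denominator are both affine in $E$.

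With $f'^2$ known as a function of $f$, the statement \eqref{f51} follows from the single quadrature $dx/df=1/f'=\pm(f'^2)^{-1/2}$, the free constant of the partial-fraction integration being written in the normalized form $c$. I expect the genuine obstacle to lie not in these integrations but in the determination of the maximal domain. One has to follow the sign of $f'^2$ as $E$ sweeps $(0,\infty)$ and extract the interval on which $f'^2>0$; because the numerator and denominator are affine in $E$, the relative order of their zeros — and hence the shape of the admissible interval — depends on the position of $K_0$ with respect to the thresholds $0$ and $1$. Carrying out this case analysis carefully is what should produce the two domains listed in items (1) and (2) and pin down the range $K_0\in(0,1)\cup(1,\infty)$ for which the integral equation is nonvacuous.
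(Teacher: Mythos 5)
Your proposal is correct and is essentially the paper's own proof: the paper likewise specializes the curvature identity \eqref{sec-curv-type1} to $g(y)=ay$ (its Eq. \eqref{cyl-k0-eq-1}), disposes of the degenerate plane case and of $K_0=1$ (grim reaper), and for $f''\neq 0$, $K_0\neq 1$ performs the same reduction of order with $p(f)=f'^2+a^2$ (your $w=1+a^2+f'^2=1+p$ is the identical substitution shifted by a constant), integrates the resulting separable equation by partial fractions to reach \eqref{p(f)-int-1}, and reads the maximal domains off the sign constraints exactly as you describe. One remark if you carry your plan to the end: your quadrature $dx/df=\pm(f'^2)^{-1/2}$ with $f'^2=\frac{cK_0-e^{2f/(1+a^2)}}{e^{2f/(1+a^2)}-c(K_0-1)}-a^2$ is the careful version, whereas the paper passes from \eqref{p(f)-int-1} to \eqref{f51} by treating $f'^2+a^2$ as if it were $f'^2$, so \eqref{f51} and the stated domains are literally exact only when $a=0$; your bookkeeping, done honestly, would surface (and correct) this slip rather than reproduce the stated formula verbatim.
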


\begin{proof} 
Equation   \eqref{Kconst-00} is now
\begin{equation} \label{cyl-k0-eq-1}
(1+a^2+f'^2)^2(K_0-1)+(1+a^2+f'^2)+(1+a^2)f''=0.
\end{equation}
We distinguish the following cases:
\begin{enumerate}
\item If $f'=0$ identically, then \eqref{cyl-k0-eq-1} is simply $(1+a^2)K_0=a^2$   and the resulting surface is a plane parallel with $K=\frac{a^2}{1+a^2}$.
\item If $K_0=1$, then Eq. \eqref{cyl-k0-eq-1} is 
$$(1+a^2)f''+1+a^2+f'^2=0. $$
The solution of this equation is
$$
f(x)=(1+a^2)\log(\cos(\frac{x+c}{\sqrt{1+a^2}}))+d, \quad c,d\in\r.
$$
This curve is known as grim reaper and the resulting surface $M$ becomes a grim reaper cylinder \cite{lop-grim}. 
\item Suppose $f''\not=0$ and  $K_0\not=1$. We introduce $P(f)=f'^2$. Denote by $p(f)=P(f)+a^2$, then \eqref{cyl-k0-eq-1} is  
$$
(1+a^2)p'+2(K_0-1)p^2 + 2(2K_0-1)p+2K_0=0.
$$
A first integral follows
\begin{equation} \label{p(f)-int-1}
f'^2+a^2=\frac{e^{\frac{2f}{1+a^2}}-cK_0}{cK_0-e^{\frac{2f}{1+a^2}}-c}, \quad   c>0,
\end{equation}
where $K_0$ must be always positive because   the left hand side is strictly  positive. In addition, the numerator and the denominator must be always negative, namely
$$
c(K_0-1)<e^{\frac{2f}{1+a^2}}<cK_0, \quad K_0\in (0,1) \cup (1,\infty).
$$
Consequently, we have the restriction on the domain
$$
\left\{ 
\begin{array}{l}
f\in(-\infty,(1+a^2)\log\sqrt{cK_0}), \quad 0<K_0<1  \\ 
f\in ((1+a^2)\log \sqrt{c(K_0-1)},(1+a^2)\log \sqrt{cK_0}), \quad K_0>1.
\end{array}%
\right. 
$$
Finally, by integrating  the differential equation \eqref{p(f)-int-1}, we obtain \eqref{f51}.
\end{enumerate}

\end{proof}
\begin{remark} As it was indicated in Introduction, generalized cylinders may have constant sectional curvature but not necessarily $0$. In the cylinders of Thm. \ref{t51} the constant $K$ is non-negative. The case $K=0$ appears with the value $a=0$.
\end{remark}

In particular cases, the integral \eqref{f51} can be explicitly obtained. See Fig. \ref{fig01}.
\begin{example}
Suppose  $a=0$. 
\begin{enumerate}
\item If   $c=2$ and $K_0=1/2$, then the solution of \eqref{f51} is
\begin{equation}\label{int-cyl-01}
x=\pm\frac{1}{2}\left(\sin ^{-1}\left(e^{2 f}\right)-\tanh ^{-1}(\sqrt{1-e^{4 f}})\right )+d, \quad d\in\r,
\end{equation}
where $f\in(-\infty,0)$.
\item If  $c=3$ and $K_0=2$, then the solution of \eqref{f51} is  
\begin{equation}\label{int-cyl-02}
x=\pm \left(\sin ^{-1} (  \sqrt{\frac{e^{2f}}{3}-1}) - \frac{1}{\sqrt{2}}%
\tan ^{-1} (\sqrt{\frac{2e^{2f}-6}{6-e^{2f}}})\right ) +d, \quad d\in\r,
\end{equation}
where $f\in(\log \sqrt{3},\log \sqrt{6})$.
\end{enumerate}
\end{example}

\begin{figure}[hbtp]
\begin{center}
\includegraphics[width=.25\textwidth]{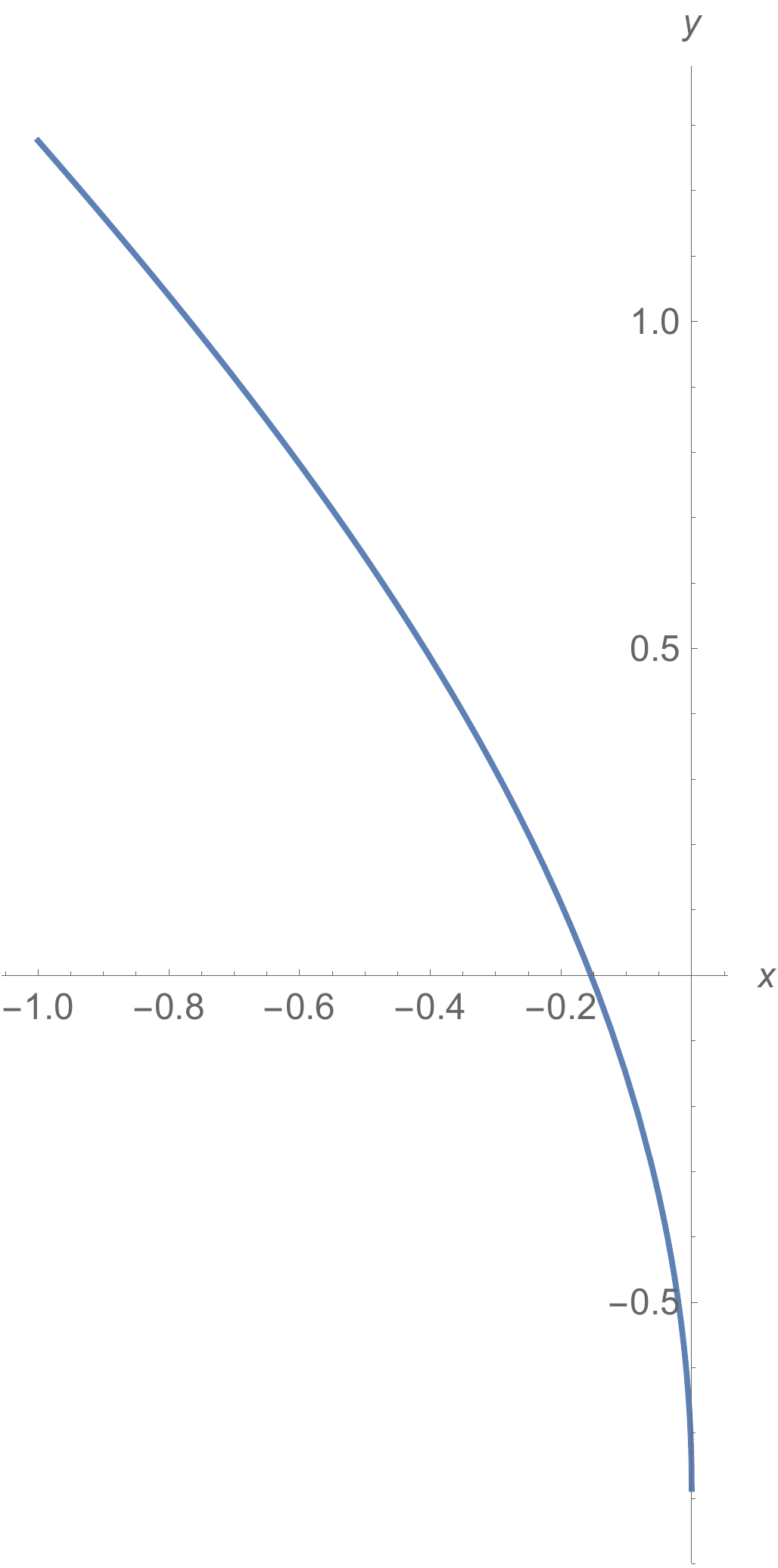}\qquad \qquad \includegraphics[width=.25\textwidth]{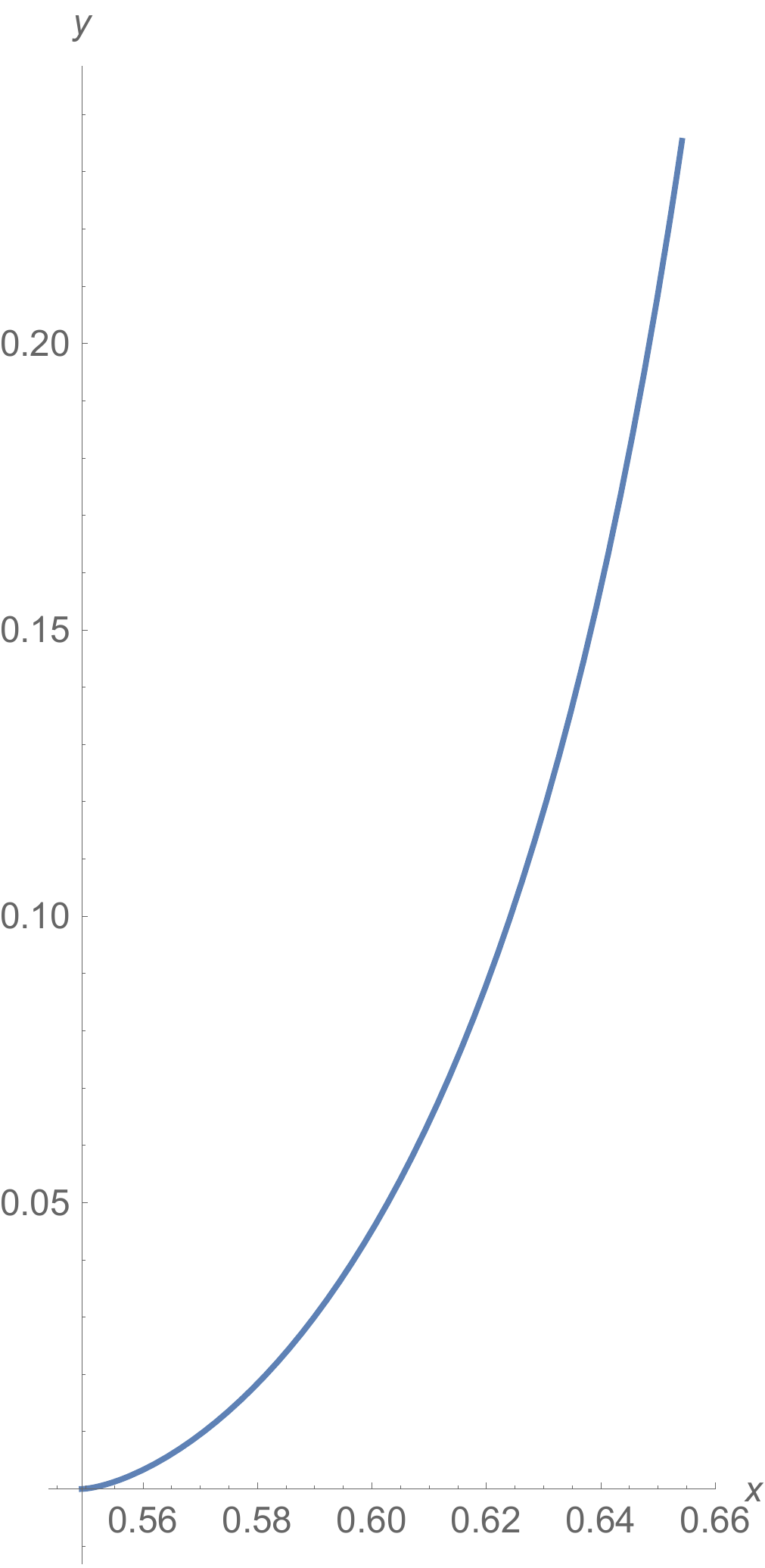}
\end{center}
\caption{Graphs of   \eqref{int-cyl-01} (left) and \eqref{int-cyl-02} (right).}\label{fig01}
\end{figure}

\subsection{Generalized cylinders $x=f(y)+az$}

Let  $M$ be  a generalized cylinder of type  $x=f(y)+az$ and assume that the sectional curvature of $M$ is  constant, $K=K_0/2$. Therefore, \eqref{Kconst-11} is now
\begin{equation} \label{cyl-k0-eq-2}
(1+a^2+f'^2)(K_0-1)+a^2K_0+K_0\frac{a^4}{1+f'^2}+(1+a^2)\frac{f''}{1+f'^2}=0.
\end{equation}
We distinguish the following two cases.
\begin{enumerate}
\item Case $a=0$. Equation \eqref{cyl-k0-eq-2} is now
$$
(1+f'^2)(K_0-1)+\frac{f''}{1+f'^2}=0.
$$
If $K_0=1$, then $f''=0$, or equivalently $M$ becomes a plane of sectional curvature $1/2$. If $K_0\neq1$, then we may write
$$
\frac{f''}{(1+f'^2)^2}=1-K_0.
$$
By integrating, we have
\begin{equation} \label{int21}
f'^2=-\frac{1+c+2(1-K_0)f}{c+2(1-K_0)f}, \quad c\in \r,
\end{equation}
where the numerator is positive and the denominator is negative. Also, the domain is
$$
\left\{ 
\begin{array}{l}
f\in (\frac{c+1}{2(K_0-1)}, \frac{c}{2(K_0-1)} ), \quad K_0<1  \\ 
f\in (\frac{c}{2(K_0-1)}, \frac{c+1}{2(K_0-1)}) , \quad K_0>1.
\end{array}%
\right. 
$$
It follows from  \eqref{int21} that
\begin{equation} \label{sol21}
y=\pm \int^f \left (-\frac{c+2(1-K_0)u}{1+c+2(1-K_0)u}\right)^{1/2}du.
\end{equation}
\item Case $a\neq 0$. We have subcases:
\begin{enumerate}
\item If $K_0=0$, then   \eqref{cyl-k0-eq-2} yields 
$$-(1+a^2+f'^2)  +(1+a^2)\frac{f''}{1+f'^2}=0.$$
With the change $P(f)=f'^2$, this equation writes as 
$$\frac{P'}{(1+a^2+P)(1+P)}=\frac{2}{1+a^2}.$$
This gives a first integral 
\begin{equation} \label{int22}
f'^2=\frac{ce^{\frac{2a^2f}{1+a^2}}-1+ca^2e^{\frac{2a^2f}{1+a^2}}}{1-ce^{\frac{2a^2f}{1+a^2}}},\quad c>0,
\end{equation}
where the domain is
$$
f\in\left (-\frac{1+a^2}{2a^2}\log c(1+a^2),-\frac{1+a^2}{2a^2}\log c \right). 
$$
Integrating   \eqref{int22} we have
\begin{equation} \label{sol22}
y=\pm\int^f \left ( \frac{1-ce^{\frac{2a^2u}{1+a^2}}}{-1+c(1+a^2)e^{\frac{2a^2u}{1+a^2}}} \right ) ^{1/2}du.
\end{equation}
\item If  $K_0=1$, we have from Eq. \eqref{cyl-k0-eq-2} that
$$(1+a^2)f''+a^2f'^2+a^2(1+a^2)=0.$$
The solution of this equation is
$$f(y)=c_1+\frac{1+a^2}{a^2}\log(\cos(\frac{a^2}{\sqrt{1+a^2}}y+c_2)),\quad c_1,c_2\in\r.$$
The curve is the grim reaper again. 
\item Suppose next that $K_0\notin \{0,1\}$. If  $P(f)=f'^2$, then   \eqref{cyl-k0-eq-2} writes as
$$
(1+a^2+P)(K_0-1)+a^2K_0+K_0\frac{a^4}{1+P}+(1+a^2)\frac{P'}{2(1+P)}=0.
$$
Integrating, one obtains
$$
f'^2(1+(K_0-1)c e^{-\frac{2a^2 f}{1+a^2}})=(1-K_0(1+a^2))ce^{  -\frac{2a^2 f}{1+a^2}}+1+a^2, \quad c>0,
$$
or equivalently
\begin{equation} \label{int23}
f'^2=-(1+a^2)\frac{(K_0- \frac{1}{1+a^2})c e^{-\frac{2a^2 f}{1+a^2}}-1}{(K_0-1)c e^{-\frac{2a^2 f}{1+a^2}}+1}.
\end{equation}
The domain is
$$
\left\{ 
\begin{array}{l}
f\in (\frac{1+a^2}{2a^2}\log c(1- K_0),\infty ), \quad -\infty <K_0\leq\frac{1}{1+a^2},  \\ 
f\in (-\infty,\frac{1+a^2}{2a^2}\lambda) \text{ or } f\in (\frac{1+a^2}{2a^2}\mu,\infty), \quad \frac{1}{1+a^2} <K_0<1, \\ 
f\in (\frac{1+a^2}{2a^2}\log c( K_0 - \frac{1}{1+a^2} ),\infty ), \quad K_0>1, 
\end{array}%
\right. 
$$
where 
$$
\left.
\begin{array}{l}
\lambda \in \text{min}\{\log c( K_0 - \frac{1}{1+a^2} ),\log c(1- K_0)\},  \\ 
\mu \in \text{max}\{\log c( K_0 - \frac{1}{1+a^2} ),\log c(1- K_0)\}.
\end{array}%
\right. 
$$
Integrating \eqref{int23}, one gets
\begin{equation} \label{sol23}
y=\pm \sqrt{(1+a^2)} \int^f  \left (-\frac{(K_0-1)c +e^{\frac{2a^2 u}{1+a^2}}}{(K_0- \frac{1}{1+a^2})c -e^{\frac{2a^2 u}{1+a^2}}} \right )^{1/2}du.
\end{equation}
\end{enumerate}
\end{enumerate}

\begin{theorem} \label{t52}
If a generalized cylinder in $\r^3$ of type $x=f(y)+az$ has constant sectional curvature with respect to the canonical snm-connection, then it is either a plane of sectional curvature $\frac12$, or a grim reaper cylinder, or one of \eqref{sol21}, \eqref{sol22} and \eqref{sol23} holds.
\end{theorem}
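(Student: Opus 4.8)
The plan is to read the classification off directly from the case analysis already carried out in this subsection, since the governing second-order ODE \eqref{cyl-k0-eq-2} has been reduced and integrated in every regime; the proof of the theorem is then a matter of collecting these outcomes and checking that the branching is exhaustive. The starting observation is that, for a cylinder $x=f(y)+az$, the linear summand $g(z)=az$ gives $g'\equiv a$ and $g''\equiv 0$, so substitution into the sectional-curvature identity \eqref{Kconst-11} produces exactly \eqref{cyl-k0-eq-2}. Hence $M$ has constant sectional curvature $K=K_0/2$ if and only if $f$ solves this equation, and the classification reduces entirely to describing its solutions.

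First I would split on whether $a=0$, because the two terms $a^2K_0$ and $K_0a^4/(1+f'^2)$ vanish precisely in that case, which alters the analytic form of \eqref{cyl-k0-eq-2}. When $a=0$ the equation collapses to $(1+f'^2)(K_0-1)+f''/(1+f'^2)=0$; the dichotomy $K_0=1$ versus $K_0\neq1$ yields either $f''=0$, giving a plane whose sectional curvature one verifies to equal $\tfrac12$, or, after one separable integration, the integral representation \eqref{sol21}.

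Next, for $a\neq0$, I would branch on the value of $K_0$, the distinguished values being $K_0=0$ (which kills $a^2K_0$ and the $K_0a^4/(1+f'^2)$ term) and $K_0=1$ (which kills the factor $K_0-1$ multiplying the leading power of $f'$). In each branch the substitution $P(f)=f'^2$ turns \eqref{cyl-k0-eq-2} into a first-order separable equation for $P$: for $K_0=0$ one integrates to \eqref{int22} and then to \eqref{sol22}; for $K_0=1$ the equation becomes $(1+a^2)f''+a^2f'^2+a^2(1+a^2)=0$, whose solution is the grim-reaper profile; and for $K_0\notin\{0,1\}$ the generic integration gives \eqref{int23} and therefore \eqref{sol23}. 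Assembling the five outcomes --- plane, \eqref{sol21}, \eqref{sol22}, grim-reaper cylinder, \eqref{sol23} --- and noting that the partition of the parameter pairs $(a,K_0)$ into these branches is exhaustive and disjoint, the theorem follows.

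The step demanding the most care is not the integration but the accounting of the admissibility domains. In each first integral one must verify that the expression obtained for $f'^2$ is nonnegative on a nonempty interval; this forces the sign conditions on the numerator and denominator of \eqref{int22} and \eqref{int23}, which in turn pin down the maximal intervals for $f$ and the admissible ranges of $K_0$ recorded beside those equations. I would therefore treat the determination of these domains, together with the verification that each branch actually produces a genuine real, non-degenerate profile curve, as the substantive content, the remaining separable integrations having already been performed above.
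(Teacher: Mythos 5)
Your proposal follows essentially the same route as the paper's own proof: the same reduction of \eqref{Kconst-11} to \eqref{cyl-k0-eq-2}, the same branching on $a=0$ versus $a\neq0$ and on the distinguished values $K_0\in\{0,1\}$, and the same separable integrations (via $P(f)=f'^2$) leading to the plane of curvature $\tfrac12$, the grim reaper cylinder, and the integral representations \eqref{sol21}, \eqref{sol22} and \eqref{sol23}, together with the same sign and domain accounting. It is correct in exactly the sense the paper's argument is, so nothing further is needed.
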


In analogy to the previous subsection, for some particular values of $c$ and $K_0$,  the integrals in Theorem \ref{t52} can be explicitly obtained. 

\begin{example}
Fix $a=0$. See Fig. \ref{fig02}.
\begin{enumerate}
\item If $c=-2$ and $K_0=-1$, then solution of the integral \eqref{sol21} is
\begin{equation}\label{int-cyl-11}
y=\mp \frac{\sqrt{1-2 f} \sqrt{4 f-1}}{2 \sqrt{2}}\pm \frac{1}{4} \sin ^{-1}\left(\sqrt{2-4 f}\right), \quad d\in\r,
\end{equation}
where $f\in(\frac{1}{4},\frac{1}{2})$. 
\item For $c=0$ and $K_0=2$, the solution of \eqref{sol21} is
\begin{equation}\label{int-cyl-12}
y=\mp \frac{\sqrt{1-2 f} \sqrt{f}}{\sqrt{2}}\pm \frac{1}{2} \sin ^{-1}\left(\sqrt{2} \sqrt{f}\right)+d, \quad d\in\r,
\end{equation}
where $f\in(0,\frac{1}{2})$. 
\end{enumerate}
\end{example}

\begin{figure}[hbtp]
\begin{center}
\includegraphics[width=.25\textwidth]{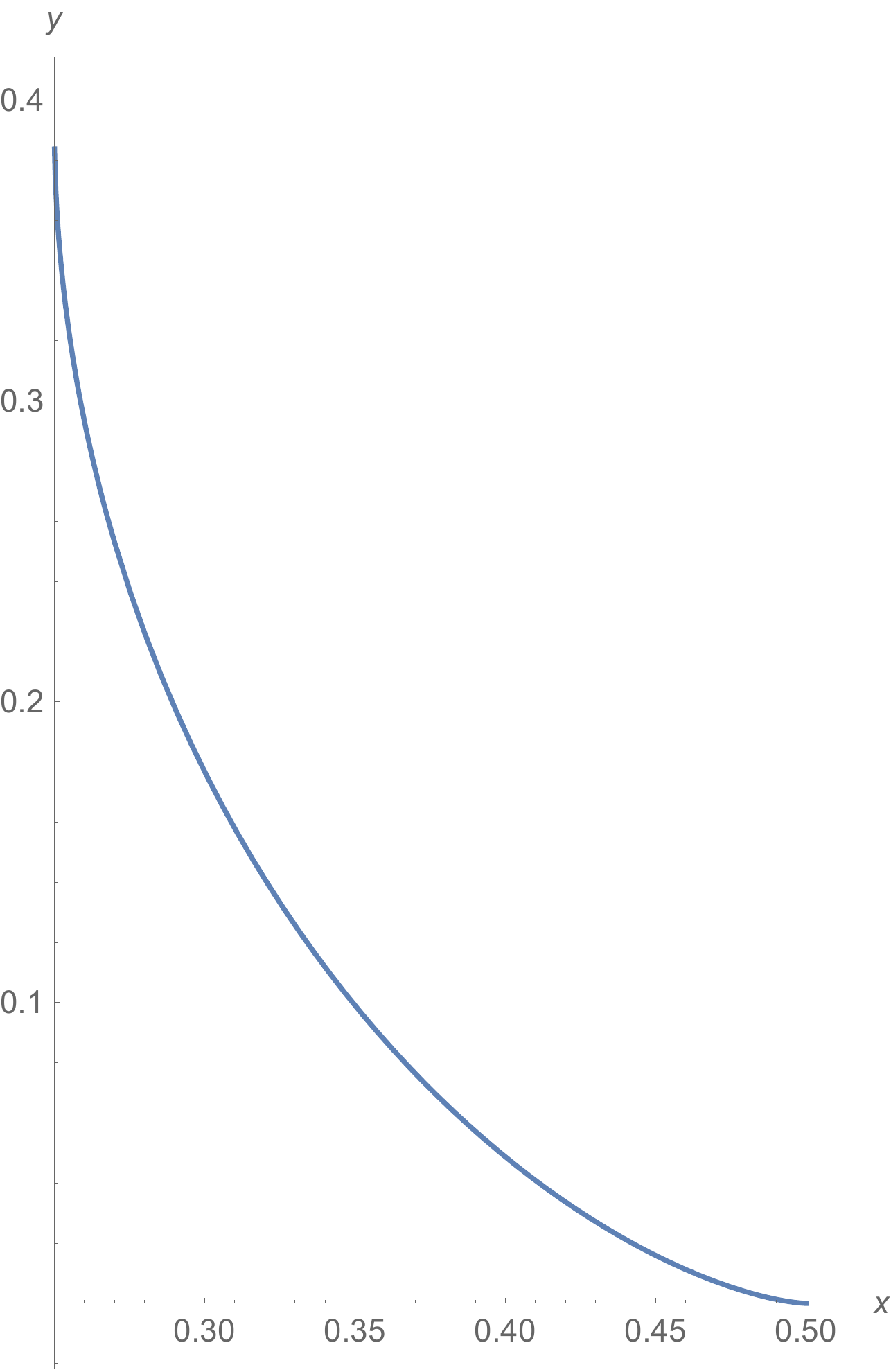}\qquad \includegraphics[width=.25\textwidth]{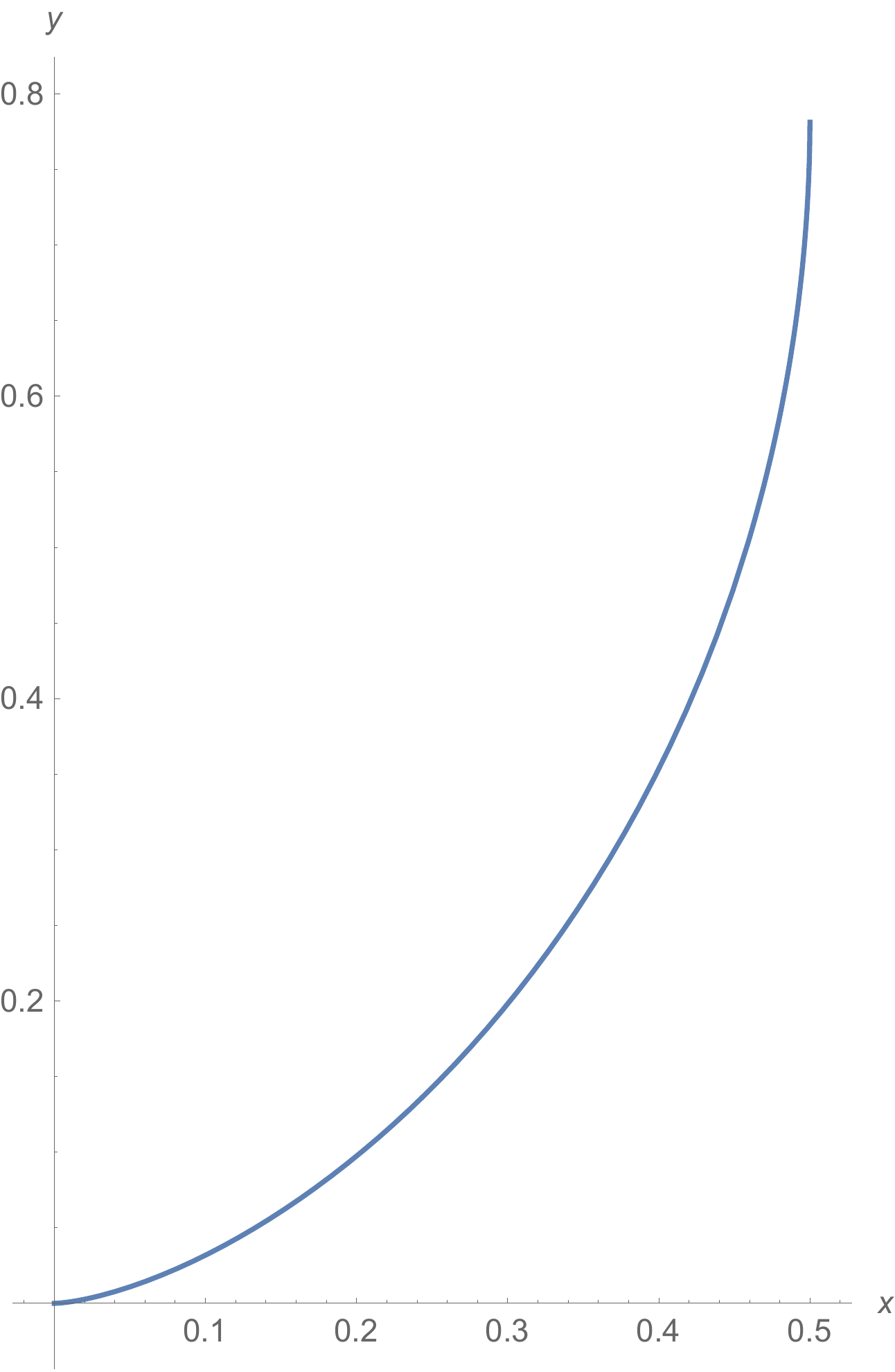} 
\end{center}
\caption{Graphs of \eqref{int-cyl-11} (left)  and \eqref{int-cyl-12} (right). }\label{fig02}
\end{figure}

\subsection{Generalized cylinders $x=ay+g(z)$}

We assume that  $M$ is  a generalized cylinder $x=ay+g(z)$ and the sectional curvature of $M$ is  constant, $K=K_0/2$. Equation \eqref{Kconst-11} is now
$$
(1+a^2+g'^2)(K_0-1)+K_0g'^2\left(1+\frac{g'^2}{1+a^2}\right)+g''=0.
$$
If $g''=0$ identically, then $M$ is a plane of sectional curvature $\frac{1+a^2}{1+a^2+c^2}$, for some constant $c$. Assume $g''\neq 0$. Hence it can be easily shown that $M$ is a grim reaper cylinder as long as $K_0=0$. Let us assume that $K_0\neq0$. We can write
\begin{equation} \label{int531}
2(1+a^2+Q)(K_0-1)+2K_0Q\left(1+\frac{Q}{1+a^2}\right)+Q'=0,
\end{equation}
where the function $Q=g'^2$ is used. We distinguish two cases:

{\bf Case $K_0=1$}. Equation \eqref{int531} is
$$
2Q\left(1+\frac{Q}{1+a^2}\right)+Q'=0.
$$
A first integration yields
\begin{equation} \label{int532}
g'^2=(1+a^2)\frac{c}{e^{2 g}-c}, \quad c>0,
\end{equation}
for $g\in(\log \sqrt{c},\infty)$. Integrating  \eqref{int532}, we find
\begin{equation} \label{sol531}
z=\pm\frac{1}{\sqrt{1+a^2}}(\sqrt{\frac{e^{2 g}}{c}-1}-\tan ^{-1}\left(\sqrt{\frac{e^{2 g}}{c}-1}\right)) .
\end{equation}

{\bf Case $K_0\notin\{0,1\}$}. The solution of \eqref{int531} is
\begin{equation} \label{int533}
g'^2=-(1+a^2)\frac{c+(K_0-1)e^{2 g} }{c+K_0e^{2 g}}, \quad c>0,
\end{equation}
where $K_0<1$ because  the left hand side is strictly positive.  Hence, we have
$$
\left\{ 
\begin{array}{l}
g\in (\log\frac{c}{1-K_0},\log\frac{c}{-K_0} ), \quad K_0<0,  \\ 
g\in (\log\frac{c}{1-K_0},\infty ), \quad 0 <K_0<1. 
\end{array}%
\right. 
$$
Integrating the differential equation \eqref{int533}, one gets
\begin{equation} \label{sol532}
z=\pm\frac{1}{\sqrt{1+a^2}}\int^g\left (\frac{c+K_0e^{2 u}}{-c+(1-K_0)e^{2 u}} \right )^{1/2} du.
\end{equation}
Therefore we have proved

\begin{theorem} \label{t53}
If a generalized cylinder in $\r^3$ of type $x=ay+g(z)$ has constant sectional curvature $K_0/2$ with respect to the canonical snm-connection, then $K_0\leq 1$ and it is either a plane ($K_0=\frac{1+a^2}{1+a^2+c^2}$, $c\in\r$) or a grim reaper cylinder ($K_0=0$), or one of \eqref{sol531} and \eqref{sol532} holds.
\end{theorem}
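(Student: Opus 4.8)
The plan is to start from the sectional-curvature identity \eqref{Kconst-11}, which was derived for an arbitrary translation surface of type $x=f(y)+g(z)$, and specialize it to the cylinder $x=ay+g(z)$ by setting $f(y)=ay$, hence $f'=a$ and $f''=0$. Because the $f''$-term on the right of \eqref{Kconst-11} then drops out, the constant-curvature condition $K=K_0/2$ collapses to a single second-order autonomous ODE for the profile $g=g(z)$ alone, with $a$ and $K_0$ entering only as parameters. This reduction is the conceptual heart of the argument: since only one free function survives, the classification becomes the integration of one ODE rather than the functional-separation problem (differentiating in both variables to decouple $f$ and $g$) that was needed in Theorem \ref{mr}.

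First I would dispose of the flat case $g''\equiv 0$. Here $g'$ is a constant, say $g'=c$, and substituting into the specialized equation yields a purely algebraic relation for $K_0$; collecting the terms over the common denominator $1+a^2$ produces a perfect square and gives $K_0=\frac{1+a^2}{1+a^2+c^2}$, so $M$ is a plane and automatically $K_0\le 1$. For the nondegenerate case $g''\neq 0$ I would apply the standard order reduction $Q(g)=g'^2$, using $dQ/dg=2g''$, which turns the second-order equation into the first-order equation \eqref{int531}, whose right-hand side is a quadratic polynomial in $Q$. This equation is separable, so the remaining work is to integrate $dQ/(\text{quadratic in }Q)=dg$ by partial fractions, splitting according to the value of $K_0$.

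The case analysis then proceeds as follows. When $K_0=0$ the specialized equation integrates directly to a $\tan$-profile whose antiderivative is of $-\log\cos$ type, that is, a grim reaper cylinder. When $K_0=1$ the quadratic loses a term and the partial-fraction integration gives the first integral \eqref{int532}, and a further quadrature yields the explicit form \eqref{sol531}. When $K_0\notin\{0,1\}$ the same procedure produces the first integral \eqref{int533}, and integrating once more gives the quadrature \eqref{sol532}. Throughout, the constants of integration are normalized to a single positive parameter $c$, and collecting the admissible values from all branches recovers the full range $K_0\le 1$ claimed in the statement.

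I expect the main obstacle to be not the integrations, which are routine separations of variables, but the sign and domain analysis that pins down the admissible range of $K_0$ and the maximal interval for $g$. The crucial point is that the left-hand side $g'^2$ of \eqref{int533} must be strictly positive: writing the right-hand side as $-(1+a^2)$ times a ratio, one checks that for $K_0>1$ both the numerator $c+(K_0-1)e^{2g}$ and the denominator $c+K_0e^{2g}$ are positive for every $g$, forcing the ratio to carry the wrong sign and excluding any solution. This is precisely what produces the bound $K_0\le 1$, and the same bookkeeping, tracking where the numerator and denominator change sign, determines the maximal domains recorded just before the theorem.
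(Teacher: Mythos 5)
Your proposal follows the paper's proof of Theorem \ref{t53} essentially step by step: the specialization $f(y)=ay$, $f'=a$, $f''=0$ in \eqref{Kconst-11}; the plane case $g''\equiv 0$ giving $K_0=\frac{1+a^2}{1+a^2+c^2}$; the grim reaper for $K_0=0$; the reduction $Q(g)=g'^2$, $dQ/dg=2g''$ leading to \eqref{int531}; and the partial-fraction quadratures producing \eqref{int532}, \eqref{sol531} for $K_0=1$ and \eqref{int533}, \eqref{sol532} for $K_0\notin\{0,1\}$. Up to the last step the two arguments coincide.

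That last step, however --- the one you yourself call the crucial point, and the only place where the bound $K_0\le 1$ is actually produced --- contains a genuine gap: the assertion that ``the constants of integration are normalized to a single positive parameter $c$.'' This is not a normalization. Integrating \eqref{int531} by partial fractions gives
\begin{equation*}
\frac{(K_0-1)(1+a^2)+K_0Q}{1+a^2+Q}=-c\,e^{-2g},
\end{equation*}
so the sign of $c$ is not at your disposal: it is opposite to the sign of $(K_0-1)(1+a^2)+K_0Q$ along the solution. For $K_0>1$ this quantity is strictly positive (since $Q=g'^2\ge 0$), so every solution necessarily has $c<0$ --- exactly the case your positivity argument does not cover. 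And with $c<0$ and $K_0>1$, the right-hand side of \eqref{int533} is positive on the nonempty window $-c/K_0<e^{2g}<-c/(K_0-1)$, where the numerator is negative and the denominator positive, so solutions do exist there. One can see the same thing without any quadrature: \eqref{int531} is an autonomous ODE with smooth right-hand side (for $a=0$, $K_0=2$ it reads $Q'=-2(1+Q)(1+2Q)$), so Picard--Lindel\"of yields local solutions with $Q>0$ for \emph{every} $K_0$, hence non-planar cylinders satisfying the curvature equation with $K_0>1$; no sign bookkeeping on a first integral can exclude them. To be fair, the paper commits the identical oversight --- it writes $c>0$ in \eqref{int533} without justification and deduces $K_0<1$ from it --- so your proposal is faithful to the printed proof; but as a proof this step fails, and the same issue means that for $0<K_0<1$ the family \eqref{int533} with $c>0$ does not exhaust all solutions either.
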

 
 \begin{remark} Notice that the range for $K$ for generalized cylinders of Thm. \ref{t52} is $\r$ while from Thm. \ref{t53}, the range for $K$ is $(-\infty,1]$.
 \end{remark}

 \begin{remark} Consider the cylinders of types $z=f(x)+ay$ and $x=ay+g(z)$. Assume that $K$ is constant. Without loss of generality we may fix $a=0$, and in such a case their rulings are parallel to the coordinate axis $\partial_y$. Also the generating curves of the two cylinders lie in the $xz$-plane but one is the graph on $x$-axis and the other is on the $z$-axis. This makes a difference, which can be observed by the ranges for $K$ and the integrals given in Thms. \ref{t51} and \ref{t53}.
 \end{remark}

\section*{Acknowledgements}
Rafael L\'opez  is a member of the IMAG and of the Research Group ``Problemas variacionales en geometr\'{\i}a'',  Junta de Andaluc\'{\i}a (FQM 325). This research has been partially supported by MINECO/MICINN/FEDER grant no. PID2020-117868GB-I00,  and by the ``Mar\'{\i}a de Maeztu'' Excellence Unit IMAG, reference CEX2020-001105- M, funded by MCINN/AEI/10.13039/501100011033/ CEX2020-001105-M.

\section*{Author's Statements}
The authors declare that there is no conflict of interest and data availability is not applicable to this article.


 \end{document}